\title{Strong Marstrand theorems and dimensions of sets formed by subsets of hyperplanes} 
\author{Kenneth Falconer and Pertti Mattila}
\date{}
\newtheorem{theo}{Theorem}
\newtheorem{lem}[theo]{Lemma}
\newcommand{\be}{\begin{equation}} 
\newcommand{\ee}{\end{equation}} 
\renewcommand{\dh}{\dim_{\rm {H}}}
\newcommand{\proj}{ \mbox{\rm proj}}
\newcommand{\cH}{\mathcal{H}}
\newcommand{\cL}{\mathcal{L}}
\newcommand{\R}{\mathbb{R}}
\newcommand{\N}{\mathbb{N}}
\begin{document}
\maketitle
\abstract{\noindent  We present strong versions of Marstrand's projection theorems and other related theorems. For example, if $E$ is a plane set of positive and finite $s$-dimensional Hausdorff measure, there is a set $X$ of directions of Lebesgue measure $0$, such that the projection onto any line with direction outside $X$, of any subset $F$ of $E$ of positive $s$-dimensional measure, has Hausdorff dimension $\min\{1,s\}$, i.e. the set of exceptional directions is independent of $F$. Using duality this leads to results on the dimension of sets that intersect families of lines or hyperplanes in positive Lebesgue measure.

\section{Introduction}
\setcounter{equation}{0}
\setcounter{theo}{0}

A Besicovitch set is a subset of $\R^n$ of Lebesgue measure zero which contains a unit line segment in every direction. Questions related to their Hausdorff dimension are connected with many problems of modern Fourier analysis and have been studied extensively during the last quarter of century, see, for example, \cite{Mat2}. An elegant way to construct such sets, going back to Besicovitch, is based on duality between lines and points. This in fact gives more, namely sets of measure zero which contain an entire line in every direction. Keleti \cite{Kel}  considered the question of whether there is any difference in the Hausdorff dimension for these two types of sets. He showed that in the plane there is not: any union of line segments in the plane has the same Hausdorff dimension as the corresponding union of lines. In this paper we shall show, with mild measurability assumptions, that more is true: we can replace line segments with subsets of lines with positive length. We shall formulate and prove this for hyperplanes in $\R^n$.

Our principal tool will be a strong version of Marstrand's projection theorem. The basic Marstrand theorem, see \cite{Mar, Mat1}, tells us that if $E$ is an $s$-subset of the plane, that is, if $E$ is measurable with respect to the $s$-dimensional Hausdorff measure $\mathcal H^s$ with 
$0<\mathcal H^s(E)<\infty$, then the projection of $E,~ \proj_L E,$ on almost every line $L$ through the origin has Hausdorff dimension $\min\{s,1\}$. Moreover, if $s>1$, then almost all projections of $E$ have positive length. In fact, Marstrand \cite{Mar} proved a more general result when $s>1$ which seems to have been almost forgotten. He showed that for almost all projections the length of $\proj_L F$ is positive for  all $\mathcal H^s$ measurable subsets $F$ of $E$ with $\mathcal H^s(F)>0$, that is, the exceptional set of lines is independent of $F$. We shall give a simple proof for this and we shall also prove the corresponding strong Marstrand theorem in the case $s\leq1$. We shall first formulate and prove these results for projections on $m$-planes in $\R^n$ and then extend them to strong versions of related theorems, including bounds on the dimension of the exceptional set of directions and on `generalized projections' subject to a transversality condition.

We would like thank the referee for the careful reading of the paper and for many valuable comments.

\section{Strong Marstrand theorems}
\setcounter{equation}{0}
\setcounter{theo}{0}

We denote by $G_{n,m}$ the Grassmannian manifold of $m$-dimensional linear subspaces of $R^n$ and by $\gamma_{n,m}$ its orthogonally invariant Borel probability measure. We write $\cL^m$ to denote $m$-dimensional Lebesgue measure on any $m$-dimensional plane. We denote by $\proj_V$  orthogonal projection onto a linear subspace $V$ of $\R^n$ and by $\proj_V\mu$ the image of a measure $\mu$ under $\proj_V$ defined by $(\proj_V\mu)(A) = \mu(\proj_V^{-1}A)$.

\begin{theo}\label{marthm}
Let $1\leq m \leq n-1$ and let $E \subset \mathbb{R}^n$ be an $\mathcal H^s$-measurable set with $0<\cH^s(E) < \infty$, where $s=\dh E>0$. Then there exists a set $X\subset G_{n,m}$ with $\gamma_{n,m}(X) = 0$ such that for all $V \in G_{n,m}\setminus X$ and all 
$\mathcal H^s$-measurable sets $F\subset E$ with $\cH ^s(F)>0$,

$({\rm i})$ $\dh \mbox{\rm proj}_V F = \min\{\dh F, m\}$,
\smallskip

$({\rm ii})$  if $s >m$ then ${\mathcal L}^m (\mbox{\rm proj}_V F) > 0$.
\smallskip

\end{theo}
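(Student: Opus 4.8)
The plan is to run the classical potential-theoretic proof of Marstrand's theorem on the single measure $\mu = \cH^s|_E$, and to extract the uniformity in $F$ from one elementary monotonicity property of projected restrictions. First I would reduce to compact sets: since $\mu$ is a finite Radon measure, every $\cH^s$-measurable $F\subset E$ with $\cH^s(F)>0$ contains a compact $K$ with $\mu(K)>0$, and $\proj_V K\subset\proj_V F$, so it suffices to prove each conclusion for such $K$. Because the argument needs finite energy while the Hausdorff density theorem only gives $\mu(B(x,r))\le C r^s$ at $\cH^s$-a.e.\ point, I would split $E$, up to a $\mu$-null set, into countably many pieces $E_j$ on each of which this bound holds uniformly. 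Writing $\mu_j=\mu|_{E_j}$ one then has $\mu_j(B(x,r))\le C_j r^s$ for all $x,r$, and hence $I_t(\mu_j)<\infty$ for every $t<s$, where $I_t(\lambda)=\iint|x-y|^{-t}\,d\lambda(x)\,d\lambda(y)$.

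The crucial point, which is what makes the exceptional set independent of $F$, is the following monotonicity: if $\nu\le\mu_j$ then $\proj_V\nu\le\proj_V\mu_j$ as measures, since $\proj_V\nu(A)=\nu(\proj_V^{-1}A)\le\mu_j(\proj_V^{-1}A)=\proj_V\mu_j(A)$. In particular, for any compact $K\subset E_j$ the restriction $\nu=\mu_j|_K$ is a nonzero measure carried by $\proj_V K$ whose projection is dominated by $\proj_V\mu_j$. Thus any finiteness or regularity property established for the \emph{single} measure $\proj_V\mu_j$ is automatically inherited by $\proj_V\nu$, and so yields information about $\proj_V K$ for \emph{every} such $K$ at once.

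For the lower bound in (i) when $s\le m$, fix $t<s$ (so $t<m$) and use the standard projection--energy identity $\int_{G_{n,m}}I_t(\proj_V\mu_j)\,d\gamma_{n,m}(V)=c(n,m,t)\,I_t(\mu_j)<\infty$ (see \cite{Mat1}). Hence $I_t(\proj_V\mu_j)<\infty$ for $V$ outside a $\gamma_{n,m}$-null set $X_{j,t}$. For such $V$ and any compact $K\subset E_j$, monotonicity and nonnegativity of the kernel give $I_t(\proj_V\nu)\le I_t(\proj_V\mu_j)<\infty$ for $\nu=\mu_j|_K\ne0$, so the nonzero measure $\proj_V\nu$ on the compact set $\proj_V K$ has finite $t$-energy and therefore $\dh\proj_V K\ge t$. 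Taking $X=\bigcup_j\bigcup_k X_{j,t_k}$ for a sequence $t_k\uparrow s$ produces a single null set off which $\dh\proj_V K\ge s$ for all relevant $K$; since any $F\subset E$ with $\mu(F)>0$ meets some $E_j$ in positive measure, the bound passes to $\proj_V F$, and with the trivial upper bound $\dh\proj_V F\le\min\{s,m\}=s$ this proves (i).

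When $s>m$ we have $I_m(\mu_j)<\infty$, and I would invoke the $L^2$ projection theorem $\int_{G_{n,m}}\|\proj_V\mu_j\|_{L^2(\cL^m)}^2\,d\gamma_{n,m}(V)\le c(n,m)\,I_m(\mu_j)<\infty$ (see \cite{Mat1}), so that for $V$ outside a null set $X_j'$ the measure $\proj_V\mu_j$ is absolutely continuous with an $L^2$ density $g_{j,V}$. By monotonicity, for $\nu=\mu_j|_K\ne0$ the measure $\proj_V\nu$ is then also absolutely continuous, with density bounded by $g_{j,V}$ and total mass $\mu_j(K)>0$; being nonzero and carried by the compact set $\proj_V K$ it forces $\cL^m(\proj_V K)>0$. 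Assembling $X=\bigcup_j X_j'$ and passing to subsets as before gives (ii), and (ii) immediately yields $\dh\proj_V F=m=\min\{s,m\}$, completing (i). The main obstacle is not any single estimate but organising the two quantitative inputs, the energy identity and the $L^2$ bound, around the decomposition $E=\bigcup_j E_j$ so that one fixed null set serves simultaneously for every subset $F$; the monotonicity $\proj_V(\mu|_F)\le\proj_V\mu$ is precisely the device that achieves this.
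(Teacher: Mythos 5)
Your proposal is correct and follows essentially the same route as the paper's own proof: the same decomposition of $\cH^s|_E$ into pieces $\mu_j$ with uniform density bounds (hence finite $t$-energy), the same energy/absolute-continuity dichotomy for parts (i) and (ii), and the same assembly of a single null exceptional set as a countable union, with the monotonicity $\proj_V(\mu_j|_K)\le\proj_V\mu_j$ (which the paper uses implicitly) supplying the uniformity over subsets $F$. The only cosmetic differences are that you use $t=m$ in the $L^2$ projection theorem where the paper takes $m<t<s$, and you state explicitly that the case $s>m$ of (i) follows from (ii).
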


\begin{proof}
(i) Since $\proj_V$ is a Lipschitz mapping, $\dh \mbox{\rm proj}_V F \leq \min\{\dh F, m\}$ for all $V\in G_{n,m}$ follows immediately.

A set $E$ with $0<\cH^s(E) < \infty$ satisfies an upper density bound $\lim_{r\to 0} \cH^s(E \cap {B(x,r)})/(2r)^s \leq 1$ for $\cH^s$-almost all $x\in E$, see \cite[Corollary 2.5]{Fal3} or \cite[Theorem 6.2]{Mat1}. Thus we may find a countable disjoint collection $E_i$ of $\mathcal H^s$-measurable subsets of $E$ with $\cH^s(E\setminus\bigcup_{i=1}^\infty E_i )=0 $ and numbers $c_i>0$  such that 
\begin{equation}\label{density}
\cH^s(E_i \cap {B(x,r)}) \leq c_i r^s \mbox{ for all } x \in E_i, r>0.
\end{equation}
For each $i$, define the restriction $\mu_i$ of $\mathcal H^s$ to $E_i$ by $ \mu_i (A) = \cH^s(E_i\cap A)$, so that 
\begin{equation}\label{sum}
\cH^s|_{E}= \sum_{i=1}^\infty\mu_i.
\end{equation}

Let $0<t<s$.  Then each $\mu_i$   has finite $t$-energy: 
$$I_t(\mu_i) := \int\int \frac{d\mu_i(x)d\mu_i(y)}{ |x-y|^{t} }< \infty;$$
this follows by integrating the energy integral by parts with respect to one of the $\mu_i$ and using \eqref{density}, see \cite[page 78]{Fal3} or \cite[page 109]{Mat1}.
Just as in the usual potential theoretic proof of the projection theorems,
\begin{align}
\int_{V \in G_{n,m}}I_t(\proj_V\mu_i)d \gamma_{n,m}(V)&= \int_{V \in G_{n,m}}\bigg[ \int_V \int_V
\frac{d(\proj_V\mu_i)(u)d(\proj_V\mu_i)(v)}{|u-v|^t} \bigg]d \gamma_{n,m}(V)\nonumber\\
&\leq c_{n,m,t} \int\int\frac{d \mu_{i}(x)d\mu_i(y)}{|x-y|^t}  < \infty.\label{enineq}
\end{align}
In particular, $\proj_V\mu_i$   has finite $t$-energy for all $V  \in G_{n,m}\setminus X_{i,t}$, where  $X_{i,t}$ is a subset of $G_{n,m}$ with  $\gamma_{n,m}(X_{i,t}) = 0$. 
Let $X= \bigcup_{i=1}^\infty  \bigcup_{j=\lfloor 1/s\rfloor +1}^\infty X_{i,s-1/j}$ so that $\gamma_{n,m}(X) = 0$. 

Let $F\subset E$ be $\mathcal H^s$-measurable with $\cH ^s(F)>0$; we may assume that $F$ is compact. From \eqref{sum}  $\mu_i (F)>0$ for some $i$. If $V  \in G_{n,m}\setminus X$, then $V  \in G_{n,m}\setminus X_{i,s-1/j}$ for all $j>1/s$, so the $(s-1/j)$-energy 
$I_{(s-1/j)}(\proj_V(\mu_i|_F))< \infty$. Since $\proj_V(\mu_i|_F)$ is supported by $\proj_V F$ it follows from the energy criterion for Hausdorff dimension, see \cite[Theorem 6.9]{Fal3} or \cite[Theorem 8.9]{Mat1}, that
$\dh \proj_V F\geq s-1/j$ for all $j$, so $\dh \proj_V F\geq s$, as required.
\medskip

$({\rm ii})$ This is very similar to $({\rm i})$. When $s>m$,  we again decompose $\cH^s|_{E}$ as in \eqref{sum} with each $\mu_i$ having finite $t$-energy for some $m<t<s$. In this case, following the Fourier transform approach of Kaufman \cite{Kau}, see also \cite[Section 6.3]{Fal3} or \cite[Sections 4.1 and 5.3]{Mat2} or the method of Theorem 9.7 in \cite{Mat1} without the Fourier transform, \eqref{enineq} is replaced by
$$\int_{V \in G_{n,m}}\int_V\big( f_{ \proj_V\mu_i}(x)\big)^2 d{\cal L}^m (x) d \gamma_{n,m}(V) \leq c'_{n,m,t} \int\int\frac{d \mu_{i}(x)d\mu_i(y)}{|x-y|^t}  < \infty,$$
where   $f_{ \proj_V\mu_i}$ is the density of the projected measure $\proj_V\mu_i$, which is absolutely continuous with respect to 
${\cal L}^m$ for almost all $V$. This absolute continuity means precisely that $\mu(F)>0$ implies ${\cal L}^m(\proj_V(F))>0$. 
Letting $X =  \bigcup_{i=1}^\infty X_{i}$, where 
$X_{i} = \{V : \proj_V\mu_i \mbox{  is not absolutely continous} \}$, the conclusion follows in the same way as in $({\rm i})$.

\end{proof}

Falconer and O'Neil in \cite{FO} and Peres and Schlag in \cite{PS} proved independently that if $s >2m$ and $E \subset \mathbb{R}^n$ is an $\mathcal H^s$-measurable set with $0<\cH^s(E) < \infty$, then the interior of $\mbox{\rm proj}_V E$ is non-empty for almost all $V$. The strong version of this is false, at least when $n-m<s$ but quite likely in all cases. To see this, suppose that $s>2m$ and let $E \subset \mathbb{R}^{n}$ be an $\mathcal H^s$-measurable set with $0<\cH^s(E) < \infty$. For a given $V\in G_{n,m}$, let $U$ be the union of a countable dense set of $(n-m)$-dimensional planes orthogonal to $V$. Then for $F=E\setminus U$, $\cH^s(F) > 0$ but $\mbox{\rm proj}_V F$ has empty interior.

The following theorem sharpens Theorem \ref{marthm} (except when $s=m$) by bounding the dimension of the exceptional set of projections:

\begin{theo}\label{marthm1}
Let $1\leq m \leq n-1$ and let $E \subset \mathbb{R}^n$ be an $\mathcal H^s$-measurable set with $0<\cH^s(E) < \infty$, where $s=\dh E>0$. 

$({\rm i})$ If $s\leq m$, there exists a set $X\subset G_{n,m}$ with $\dh X\leq m(n-m-1)+s$ such that for all $V \in G_{n,m}\setminus X$ and all $\mathcal H^s$-measurable sets $F\subset E$ with $\cH ^s(F)>0$,
$\dh \mbox{\rm proj}_V F = s$.\smallskip

$({\rm ii})$  If $s >m$, there exists a set $X\subset G_{n,m}$ with $\dh X\leq m(n-m)+m-s$ such that for all $V \in G_{n,m}\setminus X$ and all $\mathcal H^s$-measurable sets $F\subset E$ with $\cH ^s(F)>0$,
${\mathcal L}^m (\mbox{\rm proj}_V F) > 0$.
\end{theo}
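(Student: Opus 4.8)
The plan is to combine the decomposition $\cH^s|_{E}=\sum_{i=1}^\infty\mu_i$ from the proof of Theorem \ref{marthm}, in which each $\mu_i$ has finite $t$-energy $I_t(\mu_i)$ for every $t<s$, with the known bounds on the dimension of the set of subspaces onto which a \emph{single} measure of finite $t$-energy projects badly. The essential observation is that upgrading the trivial statement $\gamma_{n,m}(X_{i,t})=0$ to a Hausdorff-dimension estimate for $X_{i,t}$ costs nothing when we pass to the countable union $X=\bigcup_{i,j}X_{i,s-1/j}$, since $\dh$ of a countable union equals the supremum of the dimensions and $s-1/j\to s$.

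\emph{Part (i).} For a fixed $\mu_i$ and $0<t<m$ I would prove
$$\dh\{V\in G_{n,m}: I_t(\proj_V\mu_i)=\infty\}\le m(n-m-1)+t.$$
Suppose this fails; then by Frostman's lemma the exceptional set carries a measure $\nu$ with $\nu(B(V,r))\le Cr^\beta$ for some $\beta>m(n-m-1)+t$. The geometric heart is the kernel estimate $\int_{G_{n,m}}|\proj_V x|^{-t}\,d\nu(V)\le C|x|^{-t}$ for $x\ne 0$, which holds precisely under this condition on $\beta$: the set of $V$ annihilating a fixed direction is a copy of $G_{n-1,m}$, of dimension $m(n-m-1)$, near which $|\proj_V x|$ vanishes linearly, so the integral converges as soon as $\nu$ assigns to $\delta$-neighbourhoods of this set mass $o(\delta^{t})$. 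Integrating this bound against $d\mu_i\,d\mu_i$ gives $\int I_t(\proj_V\mu_i)\,d\nu(V)\le C\,I_t(\mu_i)<\infty$, so $I_t(\proj_V\mu_i)<\infty$ for $\nu$-almost all $V$, contradicting the choice of $\nu$.

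\emph{Part (ii).} Here, for $m<t<s$, I would use the Fourier method already invoked in the proof of Theorem \ref{marthm}. Writing, by Plancherel on $V$, $\|\proj_V\mu_i\|_2^2=\int_{V}|\widehat{\mu_i}(\xi)|^2\,d\cH^m(\xi)$ and integrating against a Frostman measure $\nu$ on $G_{n,m}$, the corresponding integral-geometric estimate yields $\int_{G_{n,m}}\|\proj_V\mu_i\|_2^2\,d\nu(V)\le C\,I_t(\mu_i)<\infty$ whenever $\nu(B(V,r))\le Cr^\beta$ with $\beta>m(n-m)+m-t$. Hence $\dh\{V:\proj_V\mu_i\notin L^2\}\le m(n-m)+m-t$; since $\proj_V\mu_i\in L^2$ forces $\proj_V\mu_i\ll\cL^m$, this bounds the set of $V$ for which $\proj_V\mu_i$ fails to be absolutely continuous.

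\emph{Conclusion and main difficulty.} In both parts I set $X=\bigcup_{i=1}^\infty\bigcup_{j}X_{i,s-1/j}$ (with $t=s-1/j$), giving $\dh X\le m(n-m-1)+s$ in case (i) and $\dh X\le m(n-m)+m-s$ in case (ii). For $V\notin X$ and $\cH^s$-measurable $F\subset E$ with $\cH^s(F)>0$ we have $\mu_i(F)>0$ for some $i$, and the argument of Theorem \ref{marthm} then yields $\dh\proj_V F=s$, respectively $\cL^m(\proj_V F)>0$. I expect the main obstacle to be the two $\nu$-averaged kernel estimates on $G_{n,m}$ with the sharp exponents $m(n-m-1)+t$ and $m(n-m)+m-t$; these are the technical core and are exactly the single-measure exceptional-set estimates of Kaufman and Mattila and, via the Fourier approach, of Falconer and Peres--Schlag, so the remaining work is the bookkeeping that transfers them, uniformly over all $F$, through the decomposition.
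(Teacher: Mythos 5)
Your proposal is correct and takes essentially the same route as the paper: the paper likewise reduces the theorem to the single-measure exceptional-set estimates of Kaufman, Kaufman--Mattila and Falconer (if $\mu$ has finite $t$-energy then $\dh\{V\in G_{n,m}: I_t(\proj_V\mu)=\infty\}\leq m(n-m-1)+t$ for $t\leq m$, and $\dh\{V\in G_{n,m}: \proj_V\mu \mbox{ is not absolutely continuous}\}\leq m(n-m)+m-t$ for $t>m$), applies them to each $\mu_i$ in the decomposition $\cH^s|_E=\sum_i\mu_i$ from Theorem \ref{marthm}, and concludes by the same countable-union argument. The only difference is presentational: the paper cites these estimates (noting the result is also a corollary of Theorem \ref{PS}), whereas you sketch their proofs via Frostman measures and the kernel/Fourier computations.
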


The weaker form of this, when $F=E$, was proved by \cite{Kau} and \cite{KM} in case $({\rm i})$ and by \cite{Fal1} in case $({\rm ii})$, and also presented in \cite[Section 5.3]{Mat2}. These proofs show that if $\mu$ is a measure with finite $t$-energy, then for all $V\in G_{n,m}\setminus X_t$ with $\dh X_t\leq m(n-m-1)+t$, if $t\leq m$ then  $ \proj_V\mu$ has  finite $t$-energy, and for all $V\in G_{n,m}\setminus X_t$ with $\dh X_t\leq m(n-m)+m-t$, if $t>m$ then $\proj_V\mu$ is absolutely continuous. Thus the same argument we used for Theorem \ref{marthm} applies, or it is a corollary of Theorem \ref{PS} below.

One can also consider more general mappings. With similar reasoning we can extend the results for the generalized projections of Peres and Schlag, see \cite{PS}, or \cite[Chapter 18]{Mat2}.  Thus Theorem \ref{PS} will include Theorems \ref{marthm} and \ref{marthm1}, but we preferred to give a separate proof for Theorem \ref{marthm}.

Let
$(\Omega,d)$ be a compact metric space and  
$Q \subset \R^N$  an open connected set. Suppose that the mappings
\begin{displaymath} \Pi_{\lambda} \colon \Omega \to \R^m, \quad \lambda \in Q, \end{displaymath}
are such that the mapping $\lambda \mapsto \Pi_{\lambda}(x)$ is in $C^{\infty}(Q)$ for every fixed 
$x \in \Omega$, and to every compact $K \subset Q$ and every multi-index $\eta=(\eta_1,\dots,\eta_N) \in \N^N, \N=\{0,1,2,\dots\},$ there corresponds a positive constant 
$C_{\eta,K}$ such that
\begin{equation*} |\partial^{\eta}_{\lambda}\Pi_{\lambda}(x)| \leq C_{\eta,K}, \quad \lambda \in K. 
\end{equation*}
Defining
$$\Phi_{\lambda}(x,y) = \frac{\Pi_{\lambda}(x)-\Pi_{\lambda}(y)}{d(x,y)} \mbox{ for }  \lambda \in Q, \ x,y \in \Omega,\  x \neq y,$$
we also assume that the family $\Pi_{\lambda}$, $\lambda \in J$, satisfies \emph{regularity of degree $\beta \geq 0$}:

There exists a positive constant $C_{\beta}$ such that to every multi-index $\eta=(\eta_1,\dots,\eta_N) \in \N^N$ there corresponds a  positive constant $C_{\beta,\eta}$ for which
\begin{equation*} |\Phi_{\lambda}(x,y)| \leq C_{\beta}d(x,y)^{\beta} \quad \Longrightarrow \quad \left| \partial_{\lambda}^{\eta} \Phi_{\lambda}(x,y) \right| \leq C_{\beta,\eta}d(x,y)^{-\beta |\eta|} \end{equation*}
\noindent for $\lambda \in Q$ and $x,y \in \Omega, x\neq y$.

Finally, we assume that $\{\Pi_{\lambda}$, $\lambda \in Q\}$ satisfies \emph{transversality of degree 
$\beta \geq 0$}: 

\begin{equation*} |\Phi_{\lambda}(x,y)| \leq C_{\beta}d(x,y)^{\beta} \quad \Longrightarrow \quad 
\det\big(D_{\lambda} \Phi_{\lambda}(x,y)(D_{\lambda} \Phi_{\lambda}(x,y)^t)\big) \geq C_{\beta}d(x,y)^{2\beta} \end{equation*}
\noindent for $\lambda \in Q$ and $x,y \in \Omega, x\neq y$.

Peres and Schlag  \cite{PS} proved a more general version where only regularity up to some finite degree $L$ is required. Then $L$ appears in the range of the parameters and the analogue of Theorem \ref{PS} follows in the same way. But for simplicity we here restrict to the case $L=\infty$ and Theorem \ref{PS} is the strong version of Theorem 7.3 in \cite{PS}. 

In the case of orthogonal projections  $\mbox{\rm proj}_V, V\in G_{n,m}$, we can take $\beta = 0$ and $N=m(n-m)$, the latter since $G_{n,m}$ is a smooth manifold of dimension $N=m(n-m)$. For various other applications, in particular for Bernoulli convolutions, a positive $\beta$ is needed.

\begin{theo}\label{PS} Under the above assumptions there exists a positive constant $\alpha_0$ depending only on $N$ and $m$ such that the following holds. Let $E \subset \Omega$ be an $\mathcal H^s$-measurable set with $0<\cH^s(E) < \infty$, where $s=\dh E>0$.

$({\rm i})$ If $s\leq m$ and $t \in (0,s-\alpha_0\beta]$, there exists a set $X\subset Q$ with $\dh X\leq N-m+t$ such that for all $\lambda \in Q\setminus X$ and all $\mathcal H^s$-measurable sets $F\subset E$ with $\cH ^s(F)>0$,
$\dh \Pi_{\lambda}(F) \geq t$.\smallskip

$({\rm ii})$ If $s\leq m$ and $t \in (0,s]$, there exists a set $X\subset Q$ with $\dh X\leq N + t - \frac{s}{1+\alpha_0\beta}$ such that for all $\lambda \in Q\setminus X$ and all $\mathcal H^s$-measurable sets $F\subset E$ with $\cH ^s(F)>0$,
$\dh \Pi_{\lambda}(F) \geq t$.\smallskip

$({\rm iii})$ If $s > m$, there exists a set $X\subset Q$ with $\dh X\leq N + m - \frac{s}{1+\alpha_0\beta}$ such that for all $\lambda \in Q\setminus X$ and all $\mathcal H^s$-measurable sets $F\subset E$ with $\cH ^s(F)>0$,
$\mathcal L^m(\Pi_{\lambda}(F))>0$.\smallskip

\end{theo}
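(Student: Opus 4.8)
The plan is to run the argument of Theorem~\ref{marthm} essentially verbatim, with the elementary energy averaging over $G_{n,m}$ replaced by the quantitative transversality estimates of Peres and Schlag, which bound the \emph{Hausdorff dimension} of the exceptional parameter set rather than merely its $\gamma_{n,m}$-measure. The single-measure input I would quote from \cite{PS} (their Theorem~7.3, taken with $L=\infty$) is as follows. Under the standing regularity and transversality of degree $\beta$, suppose $\mu$ is a compactly supported measure on $\Omega$ with $\mu(B(x,r))\le c\,r^s$ for all $x$ and $r$. Then: (a) if $t\le s-\alpha_0\beta\ (\le m)$, the set $\{\lambda\in Q: I_t(\Pi_\lambda\mu)=\infty\}$ has Hausdorff dimension at most $N-m+t$; (b) if $t\le s\ (\le m)$, the same set has Hausdorff dimension at most $N+t-\frac{s}{1+\alpha_0\beta}$; (c) if $s>m$, the set $\{\lambda\in Q:\Pi_\lambda\mu\ \text{is not absolutely continuous}\}$ has Hausdorff dimension at most $N+m-\frac{s}{1+\alpha_0\beta}$; here $\alpha_0=\alpha_0(N,m)$. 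These are exactly the bounds obtained in \cite{PS} by integrating a Sobolev norm of $\Pi_\lambda\mu$ against a Frostman measure on the putative exceptional set and invoking transversality, and I would cite them rather than reprove them.

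Granting (a)--(c), I would first decompose as in Theorem~\ref{marthm}: using the upper density bound, write $\mathcal H^s|_E=\sum_{i=1}^\infty\mu_i$ with $\mu_i=\mathcal H^s|_{E_i}$ satisfying the Frostman bound \eqref{density}, so that each $\mu_i$ has the \emph{sharp} exponent $s=\dh E$. This sharpness is the crux of the strong statement: every piece $\mu_i$ carries one and the same exponent $s$, so applying (a)--(c) to the various $\mu_i$ produces exceptional sets all obeying a single dimension bound, with no loss incurred under the ensuing countable union.

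I would then assemble $X$. In case (iii), set $X=\bigcup_i X_i$ with $X_i=\{\lambda:\Pi_\lambda\mu_i\ \text{not absolutely continuous}\}$; by (c) and the countable stability of Hausdorff dimension, $\dh X\le N+m-\frac{s}{1+\alpha_0\beta}$. In cases (i) and (ii), for each $i$ and each $\tau_j$ in a sequence increasing to $t$ let $X_{i,\tau_j}$ be the set furnished by (a), respectively (b), at target energy $\tau_j$ (with $s$ the fixed Frostman exponent of $\mu_i$), and put $X=\bigcup_{i,j}X_{i,\tau_j}$; since $\dh X_{i,\tau_j}\le N-m+\tau_j$ (resp. $\le N+\tau_j-\frac{s}{1+\alpha_0\beta}$) and $\tau_j\uparrow t$, stability gives the stated bounds $\dh X\le N-m+t$ (resp. $\le N+t-\frac{s}{1+\alpha_0\beta}$). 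Finally, for any $\mathcal H^s$-measurable $F\subset E$ with $\mathcal H^s(F)>0$, the decomposition \eqref{sum} forces $\mu_i(F)>0$ for some $i$, and $\mu_i|_F$ inherits both the Frostman bound of $\mu_i$ and, whenever $\lambda\in Q\setminus X$, the conclusions of (a)--(c). As $\Pi_\lambda(\mu_i|_F)$ is carried by $\Pi_\lambda(F)$, the energy criterion for Hausdorff dimension yields $\dh\Pi_\lambda(F)\ge\tau_j$ for every $j$, hence $\dh\Pi_\lambda(F)\ge t$ in cases (i) and (ii); in case (iii), $\Pi_\lambda(\mu_i|_F)$ is dominated by the absolutely continuous measure $\Pi_\lambda\mu_i$ and has total mass $\mu_i(F)>0$, whence $\mathcal L^m(\Pi_\lambda(F))>0$.

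The only genuine obstacle is the single-measure estimate (a)--(c) itself, the analytic heart of \cite{PS}: it is there that transversality of degree $\beta$ is converted into decay of the Sobolev norms of $\Pi_\lambda\mu$ integrated against Frostman measures on the candidate exceptional set, and there that the precise exponents, the admissible range $t\le s-\alpha_0\beta$, and the constant $\alpha_0(N,m)$ are produced. Everything downstream --- the density decomposition, the countable union preserving the sharp exponent $s$, and the passage from $E$ to an arbitrary positive-measure subset $F$ --- is the same bookkeeping as in Theorem~\ref{marthm}, which is precisely why the paper can assert that ``the same argument we used for Theorem~\ref{marthm} applies.''
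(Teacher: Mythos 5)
Your proposal is correct and takes essentially the same route as the paper: decompose $\mathcal H^s|_E$ into Frostman pieces $\mu_i$ via the upper density bound, invoke Theorem 7.3 of \cite{PS} as a black box to bound the dimension of the exceptional set for each $\mu_i$, take countable unions (using countable stability of Hausdorff dimension and an approximating sequence for the endpoint), and pass to arbitrary subsets $F$ via $\mu_i(F)>0$ together with monotonicity of energies, respectively domination by an absolutely continuous image measure. The only cosmetic difference is that you package the Peres--Schlag input in Frostman form with the sharp exponent $s$, whereas the paper quotes it in finite-$\alpha$-energy form for $\alpha<s$ (via Sobolev dimension and their inequalities (7.4) and (7.6)) and then performs the limit $\alpha\to s$ explicitly.
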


\begin{proof}
The basic tool in the proof is Sobolev dimension. For a finite Borel measure $\nu$ on $\R^m$ it is defined by
$$\dim_s\nu=\sup\bigg\{t: \int_{\R^m}|\widehat{\nu}(x)|^2(1+|x|)^{t-m}\,dx<\infty\bigg\},$$
where $\widehat{\nu}$ is the Fourier transform of $\nu$. When $0<t<m$, the integral $\int_{\R^m}|\widehat{\nu}(x)|^2(1+|x|)^{t-m}\,dx$ is comparable to the energy integral $I_t(\nu_{\lambda})$.
When  $t\geq m$, the finiteness of $\int_{\R^m}|\widehat{\nu}(x)|^2(1+|x|)^{t-m}\,dx$ implies that  $\nu$ is absolutely continuous with respect to $\mathcal L^m$. The proofs of these standard facts can be found in \cite{Mat2}.

Let $\mu$ be a finite Borel measure on $\Omega$ with $I_{\alpha}(\mu):=\iint d(x,y)^{-\alpha}\,d\mu x\,d\mu y<\infty$ and let $\nu_{\lambda}$ be the image of $\mu$ under the mapping $\Pi_{\lambda}$. 
We claim that Theorem 7.3 (with $L=\infty$) of \cite{PS} gives a positive constant $\alpha_0$ such that 
\begin{equation}\label{ps}
\dh\{\lambda\in Q: I_t(\nu_{\lambda})=\infty\}\leq N + t - m\ \text{provided}\ \alpha < m\ \text{and}\ t \in (0,\alpha-\alpha_0\beta],
\end{equation}

\begin{equation}\label{ps1}
\dh\{\lambda\in Q: I_t(\nu_{\lambda})=\infty\}\leq N + t - \frac{\alpha}{1+\alpha_0\beta}\ \text{provided}\ \alpha < m\ \text{and}\ t \in (0,\alpha],
\end{equation}

\begin{equation}\label{ps2}
\dh\{\lambda\in Q: \nu_{\lambda}\ \text{is not absolutely continuous}\}\leq N + m - \frac{\alpha}{1+\alpha_0\beta}\ \text{provided}\ \alpha > m.
\end{equation}

Indeed, the inequalities (7.6) and (7.4) in \cite{PS} yield (\ref{ps}) and (\ref{ps1}), as $\{\lambda\in Q: I_t(\nu_{\lambda})=\infty\}\subset 
\{\lambda\in Q: \dim_s(\nu_{\lambda})\leq t\}$, and (7.4) yields (\ref{ps2}), as $\dim_s(\nu_{\lambda})\leq m$ whenever $\nu_{\lambda}$ is not absolutely continuous.

To prove the theorem we write, as in the proof of Theorem \ref{marthm}, $\cH^s|_{E}= \sum_{i=1}^\infty\mu_i$, 
where each $\mu_i$   has finite $\alpha$-energy for $0<\alpha<s$. We apply the above to $\mu = \mu_i$ and let $\nu_{i,\lambda}$ be the image of $\mu_i$ under the mapping $\Pi_{\lambda}$.

For the proof of $({\rm i})$, let $s\leq m$ and $t \in (0,s-\alpha_0\beta)$; the claim for $t=s-\alpha_0\beta$ is easily reduced to this by taking the union of the exceptional sets corresponding to $t=s-\alpha_0\beta-1/j, j=1,2,\dots$. 
Choose $\alpha<s$ with  $t<\alpha-\alpha_0\beta$. Then by (\ref{ps}), 
\begin{equation*}
\dh\{\lambda\in Q: I_t(\nu_{i,\lambda})=\infty\}\leq N + t - m.
\end{equation*}
Now we have that $\nu_{i,\lambda}$   has finite $t$-energy for all $\lambda  \in \Omega\setminus X_{i}$, where  $X_{i}$ is a subset of $\Omega$ with  $\dh X_{i}\leq N + t - m$. 
Let $X= \bigcup_{i=1}^\infty   X_{i}$ so that $\dh X\leq N + t - m$. Then the same argument as in the proof of Theorem \ref{marthm} yields $({\rm i})$.

The proof of $({\rm ii})$ is essentially the same. Let $0<t<\alpha<s$. We now have first by (\ref{ps1}),
\begin{equation*}
\dh\{\lambda\in Q: I_t(\nu_{i,\lambda})=\infty\}\leq N + t - \frac{\alpha}{1+\alpha_0\beta},
\end{equation*} 
and letting $\alpha\to s$,
\begin{equation*}
\dh\{\lambda\in Q: I_t(\nu_{i,\lambda})=\infty\}\leq N + t - \frac{s}{1+\alpha_0\beta},
\end{equation*} 
The rest follows as in (i).

For the proof of $({\rm iii})$ we use the same decomposition $\cH^s|_{E}= \sum_{i=1}^\infty\mu_i$. For each $i$,
applying (\ref{ps2}) to each $\nu_{i,\lambda}$ for every $m<\alpha<s$ and letting $\alpha\to s$,
$$\dh\{\lambda\in Q: \nu_{i,\lambda}\ \text{is not absolutely continuous}\}\leq N + m - \frac{s}{1+\alpha_0\beta}.$$
Thus $\nu_{i,\lambda}$ is absolutely continuous for $\lambda  \in \Omega\setminus X_{i}$, where  $X_{i}$ is a subset of $\Omega$ with  $\dh X_{i}\leq N + m - \frac{s}{1+\alpha_0\beta}$. Letting $X= \bigcup_{i=1}^\infty X_{i}$, we have that  $\dh X\leq N + m - \frac{s}{1+\alpha_0\beta}$ and that $X$ has the desired property as before.
\end{proof}

\section{Sets in hyperplanes}
\setcounter{equation}{0}
\setcounter{theo}{0}

Now we work with hyperplanes in $\mathbb{R}^n$. For $p=(a,b) \in \mathbb{R}^{n-1}\times\mathbb{R}$ let $L(p)=L(a,b)$ denote the hyperplane 
$\{(x,y)\in\mathbb{R}^{n-1}\times\mathbb{R}: y = a\cdot x + b\}$. If $E \subset \mathbb{R}^n$ let $L(E) = \bigcup_{p \in E} L(p)$. 

For $u\in \mathbb{R}^{n-1}$ we write $L_u$ for the vertical line $\{(x,y):x=u\}$ and  
we define 
$$\pi_u: \mathbb{R}^n \to \R,\quad \pi_u(a,b)=a\cdot u + b.$$ 
Then $\pi_u$ is essentially the orthogonal projection $\proj_{l(u)}$ onto the line $l(u)=\{te_u:t\in\R\}$ where $e_u=(1+|u|^2)^{-1/2}(u,1)$. More precisely, $\proj_{l(u)}(p)=(1+|u|^2)^{-1/2}\pi_u(p)e_u$.

Intersections of families of hyperplanes $L(E)$ with vertical lines and projection of sets onto $l(u)$ are related by duality:
\begin{equation}\label{duality}
\mbox{ For } E \subset \mathbb{R}^n,  L_u \cap L(E) = \{u\}\times\pi_u(E).
\end{equation}

We shall use the following simple lemma, see, for example, \cite[Theorem 10.10]{Mat1}:

\begin{lem}\label{lemma} Let $A\subset\R^n$ be a Borel set and let $0<s\leq 1$ be such that $\dh A\cap L_u\geq s$ for $u\in\R^{n-1}$ in a set of positive $\cL^{n-1}$ measure. Then $\dh A\geq s+n-1.$
\end{lem}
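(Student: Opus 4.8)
The plan is to prove the contrapositive by means of a Fubini-type slicing inequality comparing the $\cH^{s'}$-measure of the slices $A\cap L_u$ with the $\cH^{s'+n-1}$-measure of $A$. Write $\pi\colon\R^n\to\R^{n-1}$ for the projection $\pi(x,y)=x$; this is $1$-Lipschitz and $L_u=\pi^{-1}\{u\}$, so $A\cap L_u$ is exactly the fibre of $A$ over $u$. There are two ingredients: the slicing inequality itself, and some elementary bookkeeping with Hausdorff dimensions. I expect the slicing inequality (together with the attendant measurability issue) to be the only substantive point; it is essentially \cite[Theorem 10.10]{Mat1}, but I sketch it for completeness.

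First I would establish that there is a constant $c=c(n)$ such that, for every Borel set $A\subset\R^n$, every $s'\geq 0$ and every $\delta>0$,
\[
\int^{\ast}_{\R^{n-1}}\cH^{s'}_{\delta}(A\cap L_u)\,d\cL^{n-1}(u)\leq c\,\cH^{s'+n-1}_{\delta}(A),
\]
where $\int^{\ast}$ is the upper integral and $\cH^{t}_{\delta}$ the usual pre-measure formed from covers of diameter at most $\delta$. To see this, take any cover $\{U_i\}$ of $A$ by sets of diameter $d_i\leq\delta$. For fixed $u$ the slice $A\cap L_u$ is covered by those $U_i$ that meet $L_u$, i.e.\ those with $u\in\pi(U_i)$, so $\cH^{s'}_{\delta}(A\cap L_u)\leq\sum_i d_i^{s'}\mathbf{1}_{\pi(U_i)}(u)$. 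Integrating in $u$ and using $\cL^{n-1}(\pi(U_i))\leq c\,d_i^{n-1}$ (since $\pi(U_i)$ has diameter $\leq d_i$) gives the bound $c\sum_i d_i^{\,s'+n-1}$; taking the infimum over covers yields the inequality. Using the upper integral sidesteps any measurability question about $u\mapsto\cH^{s'}(A\cap L_u)$.

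Then I would argue by contradiction. Suppose $\dh A<s+n-1$. Since $s>0$ the interval is nonempty, so I may choose $\sigma$ with $\max\{\dh A,\,n-1\}<\sigma<s+n-1$ and set $s'=\sigma-(n-1)\in(0,s)$. Because $\sigma>\dh A$ we have $\cH^{\sigma}(A)=0$, hence $\cH^{\sigma}_{\delta}(A)=0$ for all $\delta>0$; the slicing inequality then forces $\int^{\ast}\cH^{s'}_{\delta}(A\cap L_u)\,d\cL^{n-1}(u)=0$ for every $\delta$, so $\cH^{s'}_{\delta}(A\cap L_u)=0$ for $\cL^{n-1}$-a.e.\ $u$. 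Intersecting the resulting full-measure sets of $u$ over a sequence $\delta_k\downarrow 0$ gives $\cH^{s'}(A\cap L_u)=0$, and therefore $\dh(A\cap L_u)\leq s'<s$, for $\cL^{n-1}$-a.e.\ $u$. This contradicts the hypothesis that $\dh(A\cap L_u)\geq s$ on a set of $u$ of positive $\cL^{n-1}$-measure, and the lemma follows.

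The only genuine obstacle is the slicing inequality and, with it, the measurability of the slice function; both are dealt with by working throughout with the pre-measures $\cH^{t}_{\delta}$ and upper integrals as above, after which the dimensional bookkeeping is immediate. Alternatively, one could simply invoke \cite[Theorem 10.10]{Mat1} for the inequality and proceed directly to the contradiction step.
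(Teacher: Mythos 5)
Your proof is correct. Note that the paper does not prove this lemma at all: it simply cites \cite[Theorem 10.10]{Mat1} (your own closing remark about ``simply invoking'' that theorem is exactly what the authors do). What you have written out is, in substance, the standard argument behind that citation: the Fubini-type slicing inequality for the pre-measures,
$\int^{\ast}\cH^{s'}_{\delta}(A\cap L_u)\,d\cL^{n-1}(u)\leq c\,\cH^{s'+n-1}_{\delta}(A)$,
followed by the dimension bookkeeping. All the steps check out: the cover-counting argument for the inequality is sound (with the upper integral and outer measure of $\pi(U_i)$ handling measurability; alternatively one may pass to closed covers, which have the same diameters, so that $\pi(U_i)$ is compact); $\cH^{\sigma}_{\delta}(A)\leq\cH^{\sigma}(A)=0$ for $\sigma>\dh A$ is valid since the pre-measures increase as $\delta\downarrow 0$; a vanishing upper integral of a nonnegative function does force the function to vanish a.e.; and the interval $(\max\{\dh A,n-1\},\,s+n-1)$ is nonempty precisely because $s>0$, so the contradiction goes through. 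A small bonus of your route is that it never uses the Borel hypothesis on $A$ (everything is phrased with outer measures), so your argument is in fact slightly more general than the stated lemma.
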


Here is our main theorem on the dimension of unions of hyperplanes, generalizing the results of Keleti \cite{Kel}.

\begin{theo}\label{linethm}Let $E\subset\R^n$ be a non-empty Borel set and let $A\subset\R^n$ be a Borel set such that $\cL^{n-1}\big(L(p)\cap A\big)>0$  for all $p \in E$. Then 
$$\dh \big(L(E)\cap A\big)= \dh L(E) = \min\{\dh E + n - 1,n\}.$$
Moreover, if $\dh E>1$, then 
$$\cL^n\big(L(E)\cap A\big)>0.$$
\end{theo}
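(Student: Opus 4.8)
The plan is to prove the theorem by feeding the duality \eqref{duality} into the strong Marstrand theorem (Theorem~\ref{marthm}, with $m=1$), using a Tonelli argument to manufacture, for a positive-measure set of directions, a positive-measure subset of $E$ whose $\pi_u$-image is trapped inside the vertical slice of $A$. The upper bound is routine and I would dispose of it first: since $L(E)\cap A\subset L(E)$ and $L(E)$ is, on each bounded region, a Lipschitz image of a bounded piece of $E\times\R^{n-1}$, the standard product bound $\dh(B\times C)\le\dh B+\dim_{\mathrm B}C$ together with countable stability gives $\dh L(E)\le\min\{\dh E+n-1,n\}$, hence the same bound for $\dh(L(E)\cap A)$.

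For the lower bound, fix $\sigma<s=\dh E$ and choose a compact $E'\subset E$ with $0<\cH^\sigma(E')<\infty$ and $\dh E'=\sigma$; set $\mu=\cH^\sigma|_{E'}$. Applying Theorem~\ref{marthm} to $E'$ produces an exceptional set $X\subset G_{n,1}$ of $\gamma_{n,1}$-measure zero which, transported through the smooth diffeomorphism $u\mapsto l(u)$, corresponds to an $\cL^{n-1}$-null set of directions $u$. For each $u$ put $E_u'=\{p\in E':(u,\pi_u(p))\in A\}$, and observe from \eqref{duality} that $\pi_u(E_u')\subset\pi_u(E')\cap A_u$, where $A_u=\{y:(u,y)\in A\}$. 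The decisive computation is $\int\mu(E_u')\,du=\int_{E'}\cL^{n-1}(\{x:(x,a\cdot x+b)\in A\})\,d\mu(p)$ by Tonelli, whose integrand is comparable (via the bi-Lipschitz graph map $x\mapsto(x,a\cdot x+b)$) to $\cL^{n-1}\big(L(p)\cap A\big)>0$ for every $p\in E'$; hence $\mu(E_u')>0$ on a set $G$ with $\cL^{n-1}(G)>0$. For $u\in G\setminus X$ the strong theorem then yields $\dh\pi_u(E_u')=\min\{\sigma,1\}$, so by \eqref{duality} the vertical slice $(L(E)\cap A)\cap L_u$ has dimension at least $\min\{\sigma,1\}$. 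Applying Lemma~\ref{lemma} with this exponent gives $\dh(L(E)\cap A)\ge\min\{\sigma,1\}+n-1$, and letting $\sigma\to s$ delivers $\dh(L(E)\cap A)\ge\min\{\dh E+n-1,n\}$; combined with the upper bound and the inclusions, all three quantities in the theorem coincide.

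For the final assertion when $\dh E>1$ I would run the identical construction with $\sigma\in(1,s)$ and invoke part (ii) of Theorem~\ref{marthm}: for $u\in G\setminus X$ this gives $\cL^1(\pi_u(E_u'))>0$, hence the slice $(L(E)\cap A)\cap L_u$ has positive length on a set of $u$ of positive $\cL^{n-1}$-measure, and Fubini integrates these slice-lengths to $\cL^n\big(L(E)\cap A\big)>0$.

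The crux of the argument — and the precise reason the ordinary Marstrand theorem would be useless here — is that the subset $E_u'$ genuinely depends on the direction $u$; only the \emph{strong} form, in which the null set of exceptional directions is independent of the subset $F\subset E'$, permits projecting all the sets $E_u'$ simultaneously while excising a single null set. The point requiring care is the interaction of the two measure conditions: I must verify that $G\setminus X$ still has positive $\cL^{n-1}$-measure (immediate, as $X$ is null and $\cL^{n-1}(G)>0$) and that $(u,p)\mapsto\mathbf{1}_A(u,\pi_u(p))$ is jointly measurable so that Tonelli applies — both are straightforward once one notes that $A$ is Borel, $E'$ is compact, and $(u,p)\mapsto(u,\pi_u(p))$ is continuous.
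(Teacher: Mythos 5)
Your proposal is correct, and its core --- the lower bound for $\dh\big(L(E)\cap A\big)$ and the positive-measure statement --- is essentially the paper's own argument: Davies' theorem to extract a compact set $E'$ with $0<\cH^\sigma(E')<\infty$, a Tonelli/Fubini computation showing $\mu(E'_u)>0$ for a positive-measure set of $u$ (this is exactly the paper's step \eqref{eq2}), the strong Marstrand theorem with its single exceptional null set independent of the subset $E'_u$ (you correctly identify this as the crux, since $E'_u$ varies with $u$), then duality, Lemma \ref{lemma}, and a final Fubini. Where you genuinely diverge is the upper bound $\dh L(E)\le\min\{\dh E+n-1,n\}$: the paper reduces to $\dh E<1$, rotates coordinates so that all planes $L(p)$, $p\in E$, become graphs over a suitable hyperplane $V$ (after a countable decomposition of $E$ by separation of normals), and then applies Marstrand's line intersection theorem to $L_V(E_V)$, using duality once more to cap the slice dimensions by $\min\{\dh E_V,1\}$; you instead write $L(E)$ as a countable union of Lipschitz images of sets $(E\cap B(0,M))\times B(0,R)$ and invoke the product inequality $\dh(B\times C)\le\dh B+\overline{\dim}_{\rm B}\,C$ plus countable stability. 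Your route is more elementary --- no coordinate rotation, no line intersection theorem, no case split --- and it is valid; the paper's route keeps the whole proof inside the duality/slicing framework but buys nothing extra for this inequality. Two small points to tighten. First, when $E$ is unbounded, a bounded region of $L(E)$ is \emph{not} the image of a bounded piece of $E\times\R^{n-1}$ (points $(x,a\cdot x+b)$ with $|x|,|a\cdot x+b|$ small can come from huge $(a,b)$), so the decomposition must be applied to the domain, i.e.\ to $(E\cap B(0,M))\times B(0,R)$, and countable stability applied to the resulting images --- this is clearly what you intend, but the phrasing should say so. Second, the final Fubini step (and, strictly, the application of Lemma \ref{lemma}) requires $L(E)\cap A$ to be measurable; the paper secures this by noting that $L(E)$ is analytic (Keleti's lemma), whereas in your setup it is cleaner still to work with $L(E')\cap A$, which is Borel because $E'$ is compact, and whose slices already contain the sets $\{u\}\times\pi_u(E'_u)$ you control.
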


\begin{proof}

The set $L(E)$ is analytic by \cite[Lemma 2.2(ii)]{Kel}.

By Marstrand's basic projection theorem and  \eqref{duality},  
$$\min \{\dh  E, 1\} =     \dh \pi_u(E) = \dh \big(L_u \cap  L(E)\big)$$
for  $\cL^{n-1}$-almost all $u$. Thus $\dh L(E)\geq \min\{\dh E + n - 1,n\}$ by Lemma \ref{lemma}.

To obtain the opposite inequality, we need to find a basis of coordinates with respect to which we may apply Marstrand's line intersection theorem. We may assume that $\dh  E < 1$. Then $\pi(E)\neq \R^{n-1}$ where $\pi(a,b)=a$ for $(a,b)\in\R^{n-1}$, so there is some $a\not\in\pi(E)$. Rotating the coordinate system, we may then assume that $a=0$ which means that  the normals $e_a\in S^{n-1}$ of all the planes $L(a,b), (a,b)\in E,$ differ from the normal $e_0=(0,\dots,0,1)$ of the coordinate plane $V_0$ of the first $n-1$ coordinates. Hence, writing $E$ as a countable union, we may assume that for some $\delta >0,~ |e_a-e_0|>\delta$ for $(a,b)\in E$. Then if the normal $e_V$ of $V\in G_{n,n-1}$ satisfies $|e_V-e_0|<\delta$ we can write every $L(a,b), (a,b)\in E$, as a graph over $V$:
$$L(a,b)=L_V(c,d):=\{v+\big((c\cdot v)+d\big)e_V: v\in V\},\quad (c,d)\in E_V,$$
where, by simple linear algebra, the new parameter set $E_V\subset V\times \R$ is obtained from $E$ by a smooth transformation, so $\dh E_V= \dh E$. Then we also have $L(E)=L_V(E_V):=\cup_{(c,d)\in E_V}L_V(c,d)$. Given $\epsilon>0$, by Marstrand's line intersection theorem, see \cite[Theorem 10.10]{Mat1}, we can choose such a $V$ so that $\dh L_V(E_V)-n+1-\epsilon\leq\dh (L_{V,u} \cap  L_V(E_V))\leq \dh L(E_V)-n+1$ for $u\in V$ in a set of positive measure; here $L_{V,u}$ is the line $\{u+te_V:t\in\R\}$. It follows that $\dh L(E_V)-n+1=\min \{\dh  E_V, 1\}$. Hence
\begin{equation}\label{eq1}
\dh L(E) = \min\{\dh E + n - 1,n\}.
\end{equation}

\medskip

For the rest, first assume that $E$ is an $s$-set, that is $0<\cH^s(E) < \infty$, where $s=\dh E>0$. Let $A\subset \mathbb{R}^n$ be a Borel set.
For each $u\in \mathbb{R}^{n-1}$, let
\begin{equation}\label{Eu}
E_u = \{p \in E : L_u\cap L(p)\cap A \neq \emptyset\} = \{p \in E : (u,\pi_u(p)) \in A\}.
\end{equation} 
By hypothesis, $\cL^{n-1}\big\{u  \in \mathbb{R}^{n-1}: L_u\cap L(p)\cap A \neq \emptyset\big\}>0$ for all $p \in E$, whence by  Fubini's theorem,
$$(\cL^{n-1} \times \cH^s)\big\{(u,p)  \in \mathbb{R}^{n-1}\times E : L_u\cap L(p)\cap A \neq \emptyset\big\}>0,$$
and so
\begin{equation}\label{eq2}
\cL^{n-1}\big\{u \in  \mathbb{R}^{n-1}: \cH^s(E_u)>0\big\} >0.
\end{equation} 
Applying Theorem \ref{marthm}, it follows that  $\dh \pi_u(E_u)=s\wedge 1$ (where `$\wedge$' denotes `minimum') for almost all  $u$ such that $\cH^s(E_u)>0$, so that
\begin{align*}
0&<\cL^{n-1}\big\{u \in  \mathbb{R}^{n-1}: \dh \pi_u(E_u)=s\wedge 1\big\} \\ 
&= \cL^{n-1}\big\{u \in  \mathbb{R}^{n-1}: \dh \big(L_u\cap L(E_u)\big)=s\wedge1\big\} \\
&= \cL^{n-1}\big\{u \in  \mathbb{R}^{n-1}: \dh \big(L_u\cap L(E)\cap A\big) =s\wedge 1\big\}
\end{align*}
where we have used  duality \eqref{duality} and the definition of $E_u$ \eqref{Eu}.
From Lemma \ref{lemma},  $\dh \big( L(E)\cap A\big) \geq\min\{s+n-1, n\}$.
\medskip

Finally, if $E$ is an arbitrary Borel set, for each $0<s< \dh E$ there exists a compact $E' \subset E$ such that $\dh E' =s$ and $0<\cH^s(E') < \infty$ by a result of Davies \cite{Dav}, see also \cite[Theorem 5.4]{Fal3} or \cite[Theorem 8.19]{Mat1}. Since $\cL^{n-1}(L(p)\cap A))>0$  for all $p \in E'$, we conclude that $\dh \big( L(E)\cap A\big)\geq \dh \big( L(E')\cap A\big) \geq\min\{s+n-1, n\}$ for $s$ arbitrarily close to  $\dh E$. Combining this with \eqref{eq1} completes the proof of the first statement.

Only small changes are needed in the argument to show that $\cL^n\big(L(E)\cap A\big)>0$, if $\dh E>1$. Again it suffices to consider an 
$s$-set $E$ with $s>1$. In this case we need not check separately that $\cL^n\big(L(E)\big)>0$ and we can begin the argument by observing that \eqref{eq2} holds as above. Then by Theorem \ref{marthm},  $\cL^1\big(\pi_u(E_u)\big)>0$  for almost all  $u$ such that $\cH^s(E_u)>0$, which gives
$$\cL^{n-1}\big\{u \in  \mathbb{R}^{n-1}: \cL^1\big(L_u\cap L(E)\cap A\big)>0 \big\}>0,$$
and further, simply by Fubini's theorem, $\cL^n\big(L(E)\cap A\big)>0$.
\end{proof}

We could ask similar questions for $m$-planes in place of hyperplanes, but our method does not work when $1\leq m<n-1$. The case $m=1$ is particularly interesting since it is related to Besicovitch sets. Keleti \cite{Kel} conjectured that in all dimensions any union of line segments has the same Hausdorff dimension as the union of the corresponding lines. This is open when $n\geq 3$. Keleti proved that if true for some $n$ this conjecture would imply that every Besicovitch set in $\R^n$ has Hausdorff dimension at least $n-1$. This would improve the known estimates when $n\geq 5$. Moreover, he showed that if the conjecture is true for all $n$, then every Besicovitch set in $\R^n$ has upper Minkowski dimension $n$, which would be new for all $n\geq 3$.

The problem with $m$-planes, $1\leq m<n-1$, is that as above we are led to families of mappings $\pi_u$, but now the parameter $u$ runs through a space which has smaller dimension than the Grassmannian $G_{n,m}$ and the projection theorem fails. However, one can prove weaker forms 
and apply these to get estimates on $\dh L(E)$ and find conditions which guarantee the positivity of the Lebesgue measure of $L(E)$. Here $L(E)$ is now a union of $m$-planes with a parameter set $E\subset\R^{(m+1)(n-m)}$. Oberlin proved such results in \cite{Ob}. In particular he showed that $\dh E > (m+1)(n-m) - m$ implies $\cL^n(L(E))>0.$

\begin{footnotesize}

{\sc School of Mathematics and Statistics,
University of St Andrews, North Haugh, St Andrews, Fife KY16 9SS, UK,}\\
\emph{E-mail:}
\verb"kjf@st-andrews.ac.uk"

\medskip
{\sc Department of Mathematics and Statistics,
P.O. Box 68,  FI-00014 University of Helsinki, Finland,}\\
\emph{E-mail:} 
\verb"pertti.mattila@helsinki.fi"
\end{footnotesize}
\end{document}